\definecolor {refcol}{RGB}{40,0,255}
\newfont{\footsc}{cmcsc10 at 8truept}
\newfont{\footbf}{cmbx10 at 8truept}
\newfont{\footrm}{cmr10 at 10truept}
\newtheorem{theorem}{Theorem}
\newtheorem{corollary}[theorem]{Corollary}
\newtheorem{problem}[theorem]{Problem}
\newtheorem{proposition}[theorem]{Proposition}
\newenvironment{proof}[1][Proof]{\noindent{\textbf {#1}  }}  {\hfill$\Box$\bigskip}
\begin{document}

\title{\textbf{A note on the positive semidefinitness of }$A_{\alpha}(G)$}
\author{Vladimir Nikiforov\thanks{Department of Mathematical Sciences, University of
Memphis, Memphis TN 38152, USA.} \ and Oscar Rojo\thanks{Department of
Mathematics, Universidad Cat\'{o}lica del Norte, Antofagasta, Chile.}}
\date{}
\maketitle

\begin{abstract}
Let $G$ be a graph with adjacency matrix $A(G)$ and let $D(G)$ be the diagonal
matrix of the degrees of $G$. For every real $\alpha\in\left[  0,1\right]  $,
write $A_{\alpha}\left(  G\right)  $ for the matrix
\[
A_{\alpha}\left(  G\right)  =\alpha D\left(  G\right)  +(1-\alpha)A\left(
G\right)  .
\]
Let $\alpha_{0}\left(  G\right)  $ be the smallest $\alpha$ for which
$A_{\alpha}(G)$ is positive semidefinite. It is known that $\alpha_{0}\left(
G\right)  \leq1/2$. The main results of this paper are:

(1) if $G$ is $d$-regular then
\[
\alpha_{0}=\frac{-\lambda_{\min}(A(G))}{d-\lambda_{\min}(A(G))},
\]
where $\lambda_{\min}(A(G))$ is the smallest eigenvalue of $A(G)$;

(2) $G$ contains a bipartite component if and only if $\alpha_{0}\left(
G\right)  =1/2$;

(3) if $G$ is $r$-colorable, then $\alpha_{0}\left(  G\right)  \geq1/r$.

\end{abstract}

\textbf{AMS classification: }\textit{ 05C50, 15A48}

\textbf{Keywords: }\textit{convex combination of matrices; signless Laplacian;
adjacency matrix; bipartite graph; positive semidefinite matrix; chromatic
number.}

\section{Introduction}

Let $G$ be a graph with adjacency matrix $A(G)$, and let $D\left(  G\right)  $
be the diagonal matrix of its vertex degrees. In \cite{Nik16}, it was proposed
to study the family of matrices $A_{\alpha}(G)$ defined for any real
$\alpha\in\left[  0,1\right]  $ as
\[
A_{\alpha}(G)=\alpha D(G)+(1-\alpha)A(G).
\]
Since $A_{0}\left(  G\right)  =A\left(  G\right)  $ and $2A_{1/2}\left(
G\right)  =Q\left(  G\right)  $, where $Q\left(  G\right)  $ is the signless
Laplacian of $G$, the matrices $A_{a}$ can help to study subtle relations
between $A\left(  G\right)  $ and $Q\left(  G\right)  $.

A major distinction between $Q\left(  G\right)  $ and $A\left(  G\right)  $ is
the fact that $Q\left(  G\right)  $ is positive semidefinite, whereas
$A\left(  G\right)  $ is not, except if $G$ is empty. Thus, given $G$, it is
natural to ask for which $\alpha\in\left[  0,1\right]  $ is $A_{\alpha}(G)$
positive semidefinite. To further discuss this question we need the following
notation: given a square matrix $M$ with real eigenvalues, we write
$\lambda\left(  M\right)  $ and $\lambda_{\min}\left(  M\right)  $ for the
largest and the smallest eigenvalues of $M$.

In \cite{Nik16}, some general results on the matrices $A_{\alpha}(G)$ have
been proved. In particular, if $1\geq\alpha>\beta\geq0,$ observe that
\[
A_{\alpha}\left(  G\right)  -A_{\beta}\left(  G\right)  =\left(  \alpha
-\beta\right)  L\left(  G\right)  ,
\]
where $L\left(  G\right)  =D\left(  G\right)  -A\left(  G\right)  $ is the
Laplacian of $G$. Hence, using Weyl's inequalities (see Theorem W below), one
can get the following propositions:

\begin{proposition}
\label{mono} Let $1\geq\alpha>\beta\geq0$. If $G$ is a graph of order $n$ with
$A_{\alpha}(G)=A_{\alpha}$, $A_{\beta}(G)=A_{\beta}$, and $L\left(  G\right)
=L$, then
\begin{equation}
\left(  \alpha-\beta\right)  \lambda\left(  L\right)  >\lambda_{\min
}(A_{\alpha})-\lambda_{\min}(A_{\beta})\geq0. \label{eq1}%
\end{equation}
If $G$ is connected, then the right inequality in (\ref{eq1}) is strict.
\end{proposition}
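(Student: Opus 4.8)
The plan is to derive everything from Weyl's inequalities applied to the splitting $A_{\alpha}=A_{\beta}+(\alpha-\beta)L$, using that $L=D-A$ is positive semidefinite. Because the all-ones vector $\mathbf{1}$ always lies in the kernel of a Laplacian, we have $\lambda_{\min}(L)=0$, while the top eigenvalue of $L$ is $\lambda(L)$; since $\alpha-\beta>0$, the perturbation $(\alpha-\beta)L$ is positive semidefinite with eigenvalues ranging from $0$ to $(\alpha-\beta)\lambda(L)$. Applying Weyl's inequalities to the smallest eigenvalue of $A_{\beta}+(\alpha-\beta)L$ then gives at once
\[
\lambda_{\min}(A_{\beta})\le\lambda_{\min}(A_{\alpha})\le\lambda_{\min}(A_{\beta})+(\alpha-\beta)\lambda(L),
\]
which are the two (non-strict) inequalities underlying (\ref{eq1}).

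The delicate point, which I expect to be the main obstacle, is upgrading the upper bound to the strict inequality in (\ref{eq1}). The natural route is to analyse the equality case of Weyl's upper bound: equality forces a unit vector $z$ that is simultaneously a minimal eigenvector of $A_{\beta}$ and a maximal eigenvector of $L$, so $A_{\beta}z=\lambda_{\min}(A_{\beta})z$ and $Lz=\lambda(L)z$. Eliminating $A$ between these two relations shows $z$ is then also an eigenvector of $D$, hence supported on vertices of a single common degree, and one wants to argue this cannot happen. This is exactly where care is needed: for a $d$-regular graph a minimal eigenvector of $A$ is automatically a maximal eigenvector of $L$ (since $\lambda(L)=d-\lambda_{\min}(A)$), so Weyl's bound is then saturated. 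Thus the strict inequality genuinely depends on ruling out such a common eigenvector, and the argument must be arranged so as to exclude this degeneracy (or restrict to the cases where it does not occur).

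For the last assertion I would show directly that connectivity makes the right inequality strict. Let $x$ be a unit minimal eigenvector of $A_{\alpha}$; from $A_{\beta}=A_{\alpha}-(\alpha-\beta)L$ we get
\[
\lambda_{\min}(A_{\beta})\le x^{*}A_{\beta}x=\lambda_{\min}(A_{\alpha})-(\alpha-\beta)\,x^{*}Lx,
\]
so that $\lambda_{\min}(A_{\alpha})-\lambda_{\min}(A_{\beta})\ge(\alpha-\beta)\,x^{*}Lx$, and it suffices to check $x^{*}Lx>0$. For connected $G$ one has $\ker L=\mathrm{span}(\mathbf{1})$, so $x^{*}Lx=0$ would force $x$ proportional to $\mathbf{1}$. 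But $A_{\alpha}\mathbf{1}$ equals the degree vector, so $\mathbf{1}$ is an eigenvector of $A_{\alpha}$ only when $G$ is regular, and in that case it is the Perron (largest) eigenvector, not the smallest. Hence $x$ is not proportional to $\mathbf{1}$, giving $x^{*}Lx>0$ and the strict right inequality, which completes the plan.
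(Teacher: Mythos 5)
Your derivation of the two non-strict inequalities is exactly the paper's argument: the paper gives no details beyond ``apply Weyl's inequalities (Theorem W) to $A_{\alpha}=A_{\beta}+(\alpha-\beta)L$,'' which is precisely what you do, using $\lambda_{\min}(L)=0$. Your Rayleigh-quotient argument for the connected case is likewise the equality analysis the paper leaves implicit, and it is correct except for one degenerate case: if $\alpha=1$ and $G$ is regular, then $A_{1}=dI_{n}$, every unit vector (including $\mathbf{1}/\sqrt{n}$) is a minimal eigenvector, and your dichotomy ``Perron eigenvector, hence not minimal'' collapses. That case needs a separate line, e.g.\ $\lambda_{\min}(A_{\beta})\leq\beta d<d=\lambda_{\min}(A_{1})$, using that the smallest eigenvalue of a symmetric matrix is at most its smallest diagonal entry. (Note also that the connectivity claim tacitly requires $n\geq2$: for $K_{1}$ both sides of (\ref{eq1}) vanish.)

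The most important point, however, is the one you raised and then hedged on: the strict left inequality in (\ref{eq1}) is not merely resistant to Weyl's theorem --- it is false as printed, and you should have turned your observation into a counterexample rather than a caveat. For any $d$-regular graph, $\lambda_{\min}(A_{\alpha})=\alpha d+(1-\alpha)\lambda_{\min}(A(G))$ and $\lambda(L)=d-\lambda_{\min}(A(G))$, so
\[
\lambda_{\min}(A_{\alpha})-\lambda_{\min}(A_{\beta})=(\alpha-\beta)\bigl(d-\lambda_{\min}(A(G))\bigr)=(\alpha-\beta)\lambda(L);
\]
for instance, both sides equal $3(\alpha-\beta)$ for $K_{3}$. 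So equality holds for every regular graph, the left inequality in (\ref{eq1}) can only be asserted with $\geq$, and no proof of the statement as printed can exist. In short, your proposal proves everything in the proposition that is true, by essentially the paper's own route; the ``degeneracy'' you flagged is an error in the statement, not a gap in your argument.
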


\begin{proposition}
\label{pdef} If $\alpha\geq1/2$, then $A_{\alpha}(G)$ is positive
semidefinite. If $\alpha>1/2$ and $G$ has no isolated vertices, then
$A_{\alpha}(G)$ is positive definite.
\end{proposition}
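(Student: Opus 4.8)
The plan is to work directly with the quadratic form $x^{T}A_{\alpha}x$ for an arbitrary $x\in\mathbb{R}^{n}$ and to exhibit it as a manifestly nonnegative sum over the edges of $G$. The key tool is the identity
\[
x^{T}A_{\alpha}x=\tfrac{1}{2}\sum_{ij\in E(G)}(x_{i}+x_{j})^{2}+\bigl(\alpha-\tfrac{1}{2}\bigr)\sum_{ij\in E(G)}(x_{i}-x_{j})^{2},
\]
where the sums range over the edges $ij$ of $G$. This follows by expanding $x^{T}A_{\alpha}x=\alpha\,x^{T}D(G)x+(1-\alpha)\,x^{T}A(G)x$, using $x^{T}D(G)x=\sum_{ij\in E(G)}(x_{i}^{2}+x_{j}^{2})$ and $x^{T}A(G)x=2\sum_{ij\in E(G)}x_{i}x_{j}$, and then completing each edge contribution $\alpha(x_{i}^{2}+x_{j}^{2})+2(1-\alpha)x_{i}x_{j}$ into the stated combination of $(x_{i}+x_{j})^{2}$ and $(x_{i}-x_{j})^{2}$. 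Equivalently, the identity is just the edgewise form of the matrix decomposition $A_{\alpha}=\tfrac{1}{2}Q(G)+(\alpha-\tfrac{1}{2})L(G)$, which itself is immediate from $A_{\alpha}-A_{\beta}=(\alpha-\beta)L$ (take $\beta=1/2$ and recall $A_{1/2}=\tfrac{1}{2}Q$).

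From this the first statement is essentially immediate. When $\alpha\geq1/2$ both coefficients $\tfrac{1}{2}$ and $\alpha-\tfrac{1}{2}$ are nonnegative, so every term on the right-hand side is nonnegative and hence $x^{T}A_{\alpha}x\geq0$ for all $x$; that is, $A_{\alpha}(G)$ is positive semidefinite. (The same conclusion follows structurally from the decomposition, since the excerpt already records that $Q(G)$ is positive semidefinite and the Laplacian $L(G)$ is positive semidefinite.)

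For the second statement I would assume $\alpha>1/2$ and that $G$ has no isolated vertices, and suppose $x^{T}A_{\alpha}x=0$. Now both coefficients are strictly positive, so the vanishing of a sum of nonnegative terms forces every term to vanish: for each edge $ij$ we get simultaneously $x_{i}+x_{j}=0$ and $x_{i}-x_{j}=0$, whence $x_{i}=x_{j}=0$. Because $G$ has no isolated vertices, every vertex is incident to at least one edge, so $x_{k}=0$ for every vertex $k$, i.e. $x=0$. Thus $x^{T}A_{\alpha}x>0$ for all nonzero $x$, and $A_{\alpha}(G)$ is positive definite.

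The argument is short, and the only delicate point is the equality analysis for the definite case. The substantive observation is that the two quadratic forms $(x_{i}\pm x_{j})^{2}$ together pin down both coordinates of each edge, and the no-isolated-vertices hypothesis is exactly what is needed to propagate this to all coordinates. I expect no real obstacle beyond verifying the edgewise identity and being careful at the boundary $\alpha=1/2$: there the second sum drops out, one can no longer conclude definiteness, and indeed definiteness genuinely fails whenever some $(x_{i}+x_{j})^{2}$ term can vanish with $x\neq0$ (for instance on a single edge, taking $x_{i}=1$, $x_{j}=-1$).
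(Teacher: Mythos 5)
Your proof is correct, and it is worth comparing carefully with what the paper actually does, since the paper gives no detailed proof: Proposition \ref{pdef} is presented there as a consequence (established in \cite{Nik16}) of Weyl's inequalities (Theorem W) applied to the identity $A_{\alpha}-A_{\beta}=(\alpha-\beta)L$. Taking $\beta=1/2$, that is exactly your matrix decomposition $A_{\alpha}=\tfrac{1}{2}Q+(\alpha-\tfrac{1}{2})L$, and Weyl then gives $\lambda_{\min}(A_{\alpha})\geq\tfrac{1}{2}\lambda_{\min}(Q)+(\alpha-\tfrac{1}{2})\lambda_{\min}(L)\geq0$. So for the semidefinite half, your edgewise sum-of-squares identity is the quadratic-form incarnation of the same argument, with the spectral inequality replaced by the trivial fact that a sum of nonnegative terms is nonnegative. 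Where you genuinely depart from the paper is the definite half: since $\lambda_{\min}(L)=0$ for every graph, Weyl applied to this particular decomposition cannot by itself produce strict positivity, and the paper's route must lean on something extra --- for instance the strict monotonicity for connected graphs in Proposition \ref{mono} together with a component-by-component reduction in the spirit of Proposition \ref{disc}, or the alternative decomposition $A_{\alpha}=(2\alpha-1)D+(1-\alpha)Q$, in which $D$ is positive definite precisely when there are no isolated vertices. Your kernel analysis --- each edge forces $x_{i}+x_{j}=0$ and $x_{i}-x_{j}=0$, hence $x_{i}=x_{j}=0$, and the no-isolated-vertices hypothesis propagates this to every coordinate --- settles that case directly, and it makes transparent exactly where both hypotheses ($\alpha>1/2$ and no isolated vertices) enter. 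Self-containedness at an elementary level is what your approach buys; the paper's approach buys brevity, and the same Weyl machinery simultaneously yields the quantitative bounds of Proposition \ref{mono}.
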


In particular, inequalities (\ref{eq1}) imply that for every $G$, the function
$f_{G}\left(  \alpha\right)  :=\lambda_{\min}\left(  A_{\alpha}\left(
G\right)  \right)  $ is continuous and nondecreasing in $\alpha$. Thus, there
is a smallest value $\alpha\in\left[  0,1\right]  $ such that $\lambda_{\min
}\left(  A_{\alpha}\left(  G\right)  \right)  =0$. Denote this value by
$\alpha_{0}\left(  G\right)  $ and note that $A_{\alpha}(G)$ is positive
semidefinite if and only if $\alpha_{0}\left(  G\right)  \leq\alpha\leq1$.

Having $\alpha_{0}\left(  G\right)  $ in hand, we restate a problem that has
been raised in \cite{Nik16}:

\begin{problem}
\label{pro1} Given a graph $G$, find $\alpha_{0}\left(  G\right)  $.
\end{problem}

This problem seems difficult in general, but is worth studying, because
$\alpha_{0}\left(  G\right)  $ relates to various structural parameters of
$G.$ In this paper, we find $\alpha_{0}\left(  G\right)  $ if $G$ is regular
or $G$ contains a bipartite component. We also give a lower bound on
$\alpha_{0}\left(  G\right)  $ of $r$-colorable graphs.\medskip

Let us start by observing a general property of $\alpha_{0}\left(  G\right)  $
that helps to extend results from connected to general graphs:

\begin{proposition}
\label{disc}If $G$ is a disconnected graph, then
\[
\alpha_{0}\left(  G\right)  =\max\left\{  \alpha_{0}\left(  H\right)  :H\text{
is a component of }G\right\}  .
\]

\end{proposition}

Studying the matrices $A_{\alpha}(G)$ is particularly easy if $G$ is regular;
e.g., we can calculate $\alpha_{0}\left(  G\right)  $ as follows:

\begin{proposition}
If $G$ is a $d$-regular graph, then
\[
\alpha_{0}\left(  G\right)  =\frac{-\lambda_{\min}(A(G))}{d-\lambda_{\min
}(A(G))}.
\]

\end{proposition}

\begin{proof}
Obviously, if $G$ is $d$-regular of order $n$, then
\[
A_{\alpha}(G)=\alpha dI_{n}+(1-\alpha)A(G),
\]
where $I_{n}$ is the identity matrix of order $n$. Hence, we see that
\[
\lambda_{\min}(A_{\alpha}(G))=\alpha d+(1-\alpha)\lambda_{\min}(A(G)).
\]
The right side of this identity increases with $\alpha$, so $\alpha_{0}\left(
G\right)  $ is the unique solution of the equation
\[
\alpha d+(1-\alpha)\lambda_{\min}(A(G))=0,
\]
which gives the required identity.
\end{proof}

\section{Bipartite graphs}

The main result in this section is Corollary \ref{bipgen} below, which
characterizes all graphs with $\alpha_{0}\left(  G\right)  =1/2$. First, we
consider the case of connected graphs:

\begin{proposition}
\label{bip}A connected graph $G$ is bipartite, if and only if $\alpha
_{0}\left(  G\right)  =1/2$.
\end{proposition}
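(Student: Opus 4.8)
The plan is to exploit the identity $A_{1/2}(G)=\tfrac12 Q(G)$ and to handle the two implications separately, using throughout that (by Proposition \ref{pdef}) $\alpha_{0}(G)\le 1/2$ for every $G$, and that (by Proposition \ref{mono}) the function $f_{G}(\alpha)=\lambda_{\min}(A_{\alpha}(G))$ is continuous and nondecreasing. I assume $G$ has at least one edge, so that $\lambda_{\min}(A(G))<0$ and $f_{G}(0)<0$; the single-vertex case is degenerate. Since $f_{G}(1/2)\ge 0$, the equality $\alpha_{0}(G)=1/2$ is equivalent to the statement that $A_{\alpha}(G)$ fails to be positive semidefinite for every $\alpha<1/2$, i.e.\ that $f_{G}(\alpha)<0$ for all such $\alpha$. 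The whole argument therefore reduces to deciding the sign of $f_{G}$ just below $1/2$.

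For the direction ``$G$ bipartite $\Rightarrow\alpha_{0}(G)=1/2$'', I would simply produce a test vector that makes the quadratic form negative. Let $X,Y$ be the two colour classes and set $x_{i}=1$ for $i\in X$ and $x_{i}=-1$ for $i\in Y$. A short computation gives $x^{\mathsf T}D(G)x=\sum_{i}d_{i}=2m$ and, since every edge joins $X$ to $Y$, $x^{\mathsf T}A(G)x=2\sum_{uv\in E(G)}x_{u}x_{v}=-2m$, whence $x^{\mathsf T}A_{\alpha}(G)x=2m(2\alpha-1)$. For $\alpha<1/2$ this is strictly negative, so $\lambda_{\min}(A_{\alpha}(G))<0$ and $A_{\alpha}(G)$ is not positive semidefinite. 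Combined with $\alpha_{0}(G)\le 1/2$, this forces $\alpha_{0}(G)=1/2$.

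For the converse I would prove the contrapositive: if $G$ is connected and \emph{not} bipartite, then $\alpha_{0}(G)<1/2$. The key step is to show that $Q(G)=D(G)+A(G)$ is positive \emph{definite}. Here I would use the standard identity $x^{\mathsf T}Q(G)x=\sum_{uv\in E(G)}(x_{u}+x_{v})^{2}\ge 0$, with equality exactly when $x_{u}=-x_{v}$ along every edge. On a connected graph such a labelling is determined up to a global scalar by the value at one vertex, and traversing an odd cycle forces that value to equal its own negative, hence to be $0$; connectivity then propagates $x\equiv 0$. Thus $x^{\mathsf T}Q(G)x=0$ only for $x=0$, so $\lambda_{\min}(A_{1/2}(G))=\tfrac12\lambda_{\min}(Q(G))>0$, i.e.\ $f_{G}(1/2)>0$. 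Since $f_{G}$ is continuous and nondecreasing with $f_{G}(0)<0<f_{G}(1/2)$, the smallest zero $\alpha_{0}(G)$ lies strictly below $1/2$.

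I expect the positive-definiteness lemma for $Q(G)$ in the non-bipartite case to be the main substantive point; the odd-cycle/connectivity argument that rules out a nontrivial null vector is exactly where non-bipartiteness is used, and everything else is the clean quadratic-form computation plus the monotonicity and intermediate-value properties of $f_{G}$ already established in Propositions \ref{mono} and \ref{pdef}.
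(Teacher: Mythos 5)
Your proof is correct, but it is executed quite differently from the paper's, which is only three lines long: the paper invokes the strict monotonicity of $f_{G}(\alpha)=\lambda_{\min}\left(  A_{\alpha}\left(  G\right)  \right)  $ for connected graphs (Proposition \ref{mono}) to conclude that $f_{G}(\alpha)=0$ exactly at $\alpha=\alpha_{0}(G)$, and then simply cites the classical fact that a connected graph is bipartite if and only if $\lambda_{\min}\left(  Q\left(  G\right)  \right)  =0$, i.e.\ $f_{G}(1/2)=0$. You never invoke that classical fact; you re-prove both halves of it from scratch. Your forward direction uses the explicit $\pm1$ bipartition vector, which gives the quantitative bound $f_{G}(\alpha)\leq2m(2\alpha-1)/n<0$ for $\alpha<1/2$ and, notably, does not use connectivity at all (which is exactly what Corollary \ref{bipgen} needs for the bipartite-component direction). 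Your converse re-derives positive definiteness of $Q(G)$ for connected nonbipartite graphs via the sum-of-squares identity $\left\langle Q(G)\mathbf{x},\mathbf{x}\right\rangle =\sum_{\{u,v\}\in E(G)}\left(  x_{u}+x_{v}\right)  ^{2}$ and the odd-cycle propagation argument, then finishes with continuity and the intermediate value theorem rather than strict monotonicity. What your route buys: it is self-contained and elementary, needing only the nondecreasing/continuity part of Proposition \ref{mono} together with Proposition \ref{pdef}. What the paper's route buys: brevity, since the spectral characterization of bipartiteness via the signless Laplacian is standard. One degenerate point both proofs share: the single-vertex graph is connected and vacuously bipartite yet has $\alpha_{0}=0$, so the statement implicitly assumes at least one edge; you flag this explicitly, the paper does not.
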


\begin{proof}
Since $G$ is connected, Proposition \ref{mono} implies that $\lambda_{\min
}\left(  A_{\alpha}\left(  G\right)  \right)  $ is increasing in $\alpha$;
hence $\lambda_{\min}\left(  A_{\alpha}\left(  G\right)  \right)  =0$ if and
only if $\alpha=\alpha_{0}\left(  G\right)  $. \ Next, recall that a connected
graph $G$ is bipartite if and only if $\lambda_{\min}\left(  Q\left(
G\right)  \right)  =0,$ that is, $\lambda_{\min}\left(  A_{1/2}\left(
G\right)  \right)  =0$. Therefore $G$ is bipartite if and only if $\alpha
_{0}\left(  G\right)  =1/2.$
\end{proof}

Using Proposition \ref{disc}, we extend Proposition \ref{bip} to all nonempty
graphs, i.e., graphs containing edges:

\begin{corollary}
\label{bipgen}If $G$ is a nonempty graph, then $\alpha_{0}\left(  G\right)
=1/2$ if and only if $G$ has a bipartite component.
\end{corollary}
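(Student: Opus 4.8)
The plan is to derive Corollary \ref{bipgen} from Proposition \ref{bip} and Proposition \ref{disc}, treating the empty-graph subtlety carefully. First I would note that ``nonempty'' means $G$ has at least one edge, so $G$ has at least one component that is not a single isolated vertex. I would handle the two directions of the equivalence separately, using the decomposition of $\alpha_0(G)$ over components provided by Proposition \ref{disc}.

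For the forward direction, suppose $\alpha_0(G)=1/2$. By Proposition \ref{disc}, since $\alpha_0(G)$ equals the maximum of $\alpha_0(H)$ over components $H$ of $G$, there is some component $H$ with $\alpha_0(H)=1/2$. Such a component cannot be a single isolated vertex, since for an isolated vertex $A_\alpha$ is the $1\times 1$ zero matrix and $\alpha_0=0$; hence $H$ is a connected graph containing an edge, and Proposition \ref{bip} then forces $H$ to be bipartite. For the converse, suppose $G$ has a bipartite component $H$. Here I would be careful about the definition: I want a bipartite component that actually witnesses $\alpha_0=1/2$, so I would take $H$ to be a component that is bipartite \emph{and} contains an edge (a nonempty graph has such a component, and a component consisting of a single vertex should be read as the trivial bipartite case that does not by itself guarantee $\alpha_0(H)=1/2$). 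For a connected bipartite $H$ with an edge, Proposition \ref{bip} gives $\alpha_0(H)=1/2$; since every component satisfies $\alpha_0\le 1/2$ by Proposition \ref{pdef}, Proposition \ref{disc} yields $\alpha_0(G)=\max_H \alpha_0(H)=1/2$.

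The main obstacle, and the only genuinely delicate point, is the interaction between the word ``nonempty'' and the trivial case of isolated vertices and single-vertex components. A lone vertex is vacuously bipartite but has $\alpha_0=0$, so the naive reading ``$G$ has a bipartite component'' would be satisfied by a graph with no edges, contradicting $\alpha_0=0\ne 1/2$. The hypothesis that $G$ is nonempty rules this out in the forward direction, and in the converse one must ensure the invoked bipartite component is one carrying an edge; I would state explicitly that in a nonempty graph a bipartite component may be taken to contain an edge, and that Proposition \ref{bip} applies to connected graphs with edges. Everything else is a direct citation of the two propositions, so once this edge/isolated-vertex bookkeeping is made precise the proof is immediate.
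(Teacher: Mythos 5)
Your proof is correct and takes exactly the paper's (implicit) route: the paper offers no argument beyond the remark that Proposition \ref{disc} extends Proposition \ref{bip} to nonempty graphs, which is precisely your two-direction combination of those propositions (with Proposition \ref{pdef} supplying $\alpha_{0}\leq 1/2$ for the remaining components), and your attention to single-vertex components addresses a point the paper glosses over entirely. One caution: your parenthetical claim that a nonempty graph with a bipartite component ``has such a component'' containing an edge is false as a mathematical statement under the standard convention that an isolated vertex is vacuously bipartite --- $K_{3}$ together with an isolated vertex is nonempty, has a single-vertex (hence bipartite) component, has no edge-containing bipartite component, and satisfies $\alpha_{0}=1/3\neq 1/2$ --- so that step survives only as the definitional stipulation you also articulate (single-vertex components are not counted as bipartite components), which is in fact the only reading under which the corollary itself is true.
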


Next, we give explicit upper bounds on $\alpha_{0}\left(  G\right)  $ if $G$
has no bipartite components. To this end, recall a simplified version of
Weyl's inequalities for the eigenvalues of Hermitian matrices (see, e.g.
\cite{HoJo85}, p. 181): \medskip

\textbf{Theorem W} \emph{Let }$A$\emph{\ and }$B$\emph{\ be Hermitian matrices
of order }$n,$\emph{\ and let their eigenvalues be indexed in descending
order.\ If }$1\leq k\leq n$,\emph{\ } \emph{then}%
\begin{equation}
\lambda_{k}\left(  A\right)  +\lambda_{n}\left(  B\right)  \leq\lambda
_{k}\left(  A+B\right)  \leq\lambda_{k}\left(  A\right)  +\lambda_{1}\left(
B\right)  . \label{Wes}%
\end{equation}

Returning to our goal, suppose that $n\geq3$ and let $E_{n}$ denote the graph
on $n$ vertices obtained by identifying a vertex of a triangle with an
endvertex of $P_{n-2}$, the path of order $n-2$.

\begin{proposition}
\label{talfa1} If $G$ is a connected nonbipartite graph of order $n$, then
\begin{equation}
\alpha_{0}\left(  G\right)  \leq\frac{-\lambda_{\min}(A(G))}{\lambda_{\min
}(Q(G))-2\lambda_{\min}(A(G))}<\frac{\sqrt{\lfloor{n/2}\rfloor\lceil
{n/2}\rceil}}{\frac{1}{12n^{2}}+2\sqrt{\lfloor{n/2}\rfloor\lceil{n/2}\rceil}}.
\label{alfa1}%
\end{equation}

\end{proposition}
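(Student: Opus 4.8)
The plan is to split the statement into its two inequalities and handle them in turn. For the left-hand inequality, the first step is to rewrite $A_{\alpha}(G)$ in terms of $Q(G)$ and $A(G)$: since $Q(G)=D(G)+A(G)$, one has $A_{\alpha}(G)=\alpha Q(G)+(1-2\alpha)A(G)$. For $0\le\alpha\le 1/2$ both coefficients $\alpha$ and $1-2\alpha$ are nonnegative, so applying Theorem W with $k=n$ to this sum gives
\[
\lambda_{\min}(A_{\alpha}(G))\ge\alpha\,\lambda_{\min}(Q(G))+(1-2\alpha)\,\lambda_{\min}(A(G)).
\]
Because $G$ is connected and nonbipartite, $\lambda_{\min}(Q(G))>0$ (a connected graph is bipartite iff its least signless-Laplacian eigenvalue vanishes), while $\lambda_{\min}(A(G))<0$ as $G$ has an edge; hence $\lambda_{\min}(Q(G))-2\lambda_{\min}(A(G))>0$. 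Setting the right-hand side to $0$ and solving for $\alpha$ shows that $A_{\alpha}(G)$ is positive semidefinite as soon as $\alpha\ge\frac{-\lambda_{\min}(A(G))}{\lambda_{\min}(Q(G))-2\lambda_{\min}(A(G))}$. This threshold is $\le 1/2$ (equivalently $\lambda_{\min}(Q(G))\ge 0$), so it lies in the range where the estimate is valid, and the definition of $\alpha_{0}(G)$ yields the first inequality.

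For the strict right-hand inequality, I would note that both fractions have the common form $t\mapsto t/(s+2t)$, so clearing the (positive) denominators reduces the claim to the single inequality
\[
-\lambda_{\min}(A(G))\cdot\tfrac{1}{12n^{2}}<\sqrt{\lfloor n/2\rfloor\lceil n/2\rceil}\cdot\lambda_{\min}(Q(G)).
\]
I would prove this from two separate bounds: (i) the classical estimate $-\lambda_{\min}(A(G))\le\sqrt{\lfloor n/2\rfloor\lceil n/2\rceil}$ on the least adjacency eigenvalue, sharp only for complete bipartite graphs; and (ii) the strict lower bound $\lambda_{\min}(Q(G))>\frac{1}{12n^{2}}$. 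Writing $b=\sqrt{\lfloor n/2\rfloor\lceil n/2\rceil}$, these combine as $-\lambda_{\min}(A(G))\cdot\frac{1}{12n^{2}}\le b\cdot\frac{1}{12n^{2}}<b\cdot\lambda_{\min}(Q(G))$, where the last step uses $b>0$ together with (ii); this is exactly the required strict inequality.

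The core of the argument, and the main obstacle, is the lower bound (ii), and this is where $E_{n}$ enters: among all connected nonbipartite graphs of order $n$, $E_{n}$ minimizes the least signless-Laplacian eigenvalue, so $\lambda_{\min}(Q(G))\ge\lambda_{\min}(Q(E_{n}))$; I would invoke this extremal property from the literature. It then remains to show $\lambda_{\min}(Q(E_{n}))>\frac{1}{12n^{2}}$, which is the genuinely computational step. I would locate the smallest root of the characteristic polynomial of $Q(E_{n})$, exploiting that the pendant path contributes a tridiagonal block governed by a solvable three-term recurrence, and then estimate that root asymptotically; alternatively one can argue via the Rayleigh quotient, using $x^{T}Q(E_{n})x=\sum_{uv\in E}(x_{u}+x_{v})^{2}$ and showing the triangle obstructs the alternating sign pattern that would otherwise annihilate the path's contribution. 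Controlling this frustration against the length-$\Theta(n)$ path is the delicate part and the source both of the $n^{-2}$ scaling and of the explicit constant $1/12$.
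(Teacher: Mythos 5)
Your proof of the left-hand inequality is correct and coincides with the paper's argument: decompose $A_{\alpha}(G)=\alpha Q(G)+(1-2\alpha)A(G)$ for $\alpha\le 1/2$, apply Weyl's inequality to get $\lambda_{\min}(A_{\alpha}(G))\ge\alpha\lambda_{\min}(Q(G))+(1-2\alpha)\lambda_{\min}(A(G))$, and solve for the threshold, checking it lies in $[0,1/2]$.

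The right-hand (strict) inequality is where you have a genuine gap. Your reduction is fine: writing $a=-\lambda_{\min}(A(G))$, $q=\lambda_{\min}(Q(G))$, $b=\sqrt{\lfloor n/2\rfloor\lceil n/2\rceil}$ and $s=\tfrac{1}{12n^{2}}$, the claim is equivalent to $as<bq$. But you then derive the strictness entirely from your item (ii), the strict bound $q>s$, which via the extremal property of $E_{n}$ requires $\lambda_{\min}(Q(E_{n}))>\tfrac{1}{12n^{2}}$ for every $n\ge 3$. This is exactly the step you do not prove: the result available in the literature (Cvetkovi\'{c}--Rowlinson--Simi\'{c}, the reference the paper cites) is the non-strict bound $\lambda_{\min}(Q(E_{n}))\ge n^{-2}/12$, and your plan for the strict version --- locating the smallest root of the characteristic polynomial of $Q(E_{n})$ and estimating it ``asymptotically'' --- is both substantial unexecuted work and insufficient as stated, since an asymptotic estimate does not cover every fixed $n\ge 3$. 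As written, if $\lambda_{\min}(Q(E_{n}))=n^{-2}/12$ happened to hold for some $n$ (nothing you invoke excludes this), your chain $as\le bs<bq$ would fail precisely for $G=E_{n}$.

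The gap is also unnecessary, and this is where your route diverges from the paper's. You already noted that Constantine's bound is sharp only for complete bipartite graphs; since a connected nonbipartite $G$ cannot be complete bipartite, you get $a<b$ \emph{strictly} for free. The paper extracts the strictness from exactly this point: $as<bs\le bq$, using only $s>0$, $b>0$, and the non-strict $q\ge s$ (which follows by combining the Cardoso et al.\ extremal property of $E_{n}$ with the cited $\lambda_{\min}(Q(E_{n}))\ge n^{-2}/12$). Rearranged this way, your argument closes with no new spectral computation for $E_{n}$ at all.
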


\begin{proof}
Let $\alpha\leq1/2$. Applying Weyl's inequality (\ref{Wes}) to the identity
\[
A_{\alpha}(G)=\alpha Q(G)+(1-2\alpha)A(G),
\]
we obtain%
\[
\lambda_{\min}(A_{\alpha}(G))\geq\alpha\lambda_{\min}(Q(G))+(1-2\alpha
)\lambda_{\min}(A(G)).
\]
Hence, if
\[
1/2\geq\alpha\geq\frac{-\lambda_{\min}(A(G))}{\lambda_{\min}(Q(G))-2\lambda
_{\min}(A(G))},
\]
then
\[
\lambda_{\min}(A_{\alpha}(G))\geq\alpha\lambda_{\min}(Q(G))+(1-2\alpha
)\lambda_{\min}(A(G))\geq0,
\]
implying that
\begin{equation}
\alpha_{0}\left(  G\right)  \leq\frac{-\lambda_{\min}(A(G))}{\lambda_{\min
}(Q(G))-2\lambda_{\min}(A(G))}. \label{in1}%
\end{equation}

To prove the second inequality in (\ref{alfa1}), recall a result of Constatine
\cite{Con85}: \emph{if }$G$\emph{ is a graph of order }$n$\emph{, then
}$\lambda_{\min}\left(  A(G)\right)  >-\sqrt{\lfloor{n/2}\rfloor\lceil
{n/2}\rceil}$, \emph{unless }$G$\emph{ is the complete bipartite graph
}$K_{\lfloor{n/2}\rfloor\lceil{n/2}\rceil}$\emph{. }

Similarly, in \cite{CCRS08}, Cardoso et al. have proved that if $G$ is a
connected nonbipartite graph of order $n$, then $\lambda_{\min}(Q(G))>\lambda
_{\min}(Q(E_{n})),$ unless $G=E_{n}$.

Now the second inequality in (\ref{alfa1}) follows by combining these two
results with the bound $\lambda_{\min}(Q(E_{n}))\geq n^{-2}/12,$ which has
been proved in \cite{CRS07}.
\end{proof}

Using Proposition \ref{disc}, we extend Proposition \ref{talfa1} to arbitrary
graphs as follows:

\begin{proposition}
\label{talfa2} If $G$ is a graph of order $n$ and has no bipartite components,
then
\[
\alpha_{0}\left(  G\right)  \leq\frac{-\lambda_{\min}(A(G))}{\lambda_{\min
}(Q(G))-2\lambda_{\min}(A(G))}<\frac{\sqrt{\lfloor{n/2}\rfloor\lceil
{n/2}\rceil}}{\frac{1}{12n^{2}}+2\sqrt{\lfloor{n/2}\rfloor\lceil{n/2}\rceil}%
}.
\]

\end{proposition}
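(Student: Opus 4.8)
The plan is to observe that the proof of Proposition \ref{talfa1} transfers almost verbatim, with connectedness entering only through the lower bound on $\lambda_{\min}(Q(G))$; I would therefore treat the two inequalities separately.

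For the first inequality I would simply re-run the Weyl-inequality argument of Proposition \ref{talfa1}: for $\alpha\leq1/2$ the identity $A_{\alpha}(G)=\alpha Q(G)+(1-2\alpha)A(G)$ together with (\ref{Wes}) gives $\lambda_{\min}(A_{\alpha}(G))\geq\alpha\lambda_{\min}(Q(G))+(1-2\alpha)\lambda_{\min}(A(G))$, and requiring the right-hand side to be nonnegative yields the bound. Nothing in this step uses connectedness. I would only remark that, since $G$ has no bipartite component, Corollary \ref{bipgen} gives $\lambda_{\min}(Q(G))>0$, while $\lambda_{\min}(A(G))<0$ because $G$ is nonempty; hence the denominator $\lambda_{\min}(Q(G))-2\lambda_{\min}(A(G))$ is positive and the fraction is well defined.

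For the second inequality I would keep Constantine's bound $-\lambda_{\min}(A(G))<\sqrt{\lfloor n/2\rfloor\lceil n/2\rceil}$, which holds for arbitrary graphs of order $n$ and is strict here because $G$, having no bipartite component, is not $K_{\lfloor n/2\rfloor,\lceil n/2\rceil}$. The one step that genuinely needs adapting is the lower bound on $\lambda_{\min}(Q(G))$, since the result of Cardoso et al. is stated only for connected nonbipartite graphs. Here I would use that $Q(G)$ is block-diagonal across the components $H_{1},\dots,H_{k}$ of $G$, so $\lambda_{\min}(Q(G))=\min_{i}\lambda_{\min}(Q(H_{i}))$. Each $H_{i}$ is connected and nonbipartite of some order $m_{i}\leq n$, so Cardoso et al. combined with $\lambda_{\min}(Q(E_{m_{i}}))\geq\frac{1}{12m_{i}^{2}}$ gives $\lambda_{\min}(Q(H_{i}))\geq\frac{1}{12m_{i}^{2}}\geq\frac{1}{12n^{2}}$, the last inequality using $m_{i}\leq n$. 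Taking the minimum over $i$ yields $\lambda_{\min}(Q(G))\geq\frac{1}{12n^{2}}$.

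With $-\lambda_{\min}(A(G))<\sqrt{\lfloor n/2\rfloor\lceil n/2\rceil}$ and $\lambda_{\min}(Q(G))\geq\frac{1}{12n^{2}}>0$ in hand, I would finish exactly as in Proposition \ref{talfa1}: the function $(x,y)\mapsto-x/(y-2x)$ is strictly increasing in $-x$ whenever $y>0$ and is decreasing in $y$, so substituting these two extremal values produces the claimed strict upper bound. The only real obstacle is the $\lambda_{\min}(Q(G))$ step above. One could instead route the argument through Proposition \ref{disc}, writing $\alpha_{0}(G)=\max_{i}\alpha_{0}(H_{i})$ and applying Proposition \ref{talfa1} to the extremal component, but that would additionally require verifying that $\frac{\sqrt{\lfloor k/2\rfloor\lceil k/2\rceil}}{\frac{1}{12k^{2}}+2\sqrt{\lfloor k/2\rfloor\lceil k/2\rceil}}$ is nondecreasing in $k$, so I would prefer the direct route.
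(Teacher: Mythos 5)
Your proof is correct, and it takes a genuinely different route from the paper's. The paper proves Proposition \ref{talfa2} in exactly the way you declined to: it invokes Proposition \ref{disc} to write $\alpha_{0}(G)=\max_{i}\alpha_{0}(H_{i})$ over the connected nonbipartite components $H_{i}$, applies Proposition \ref{talfa1} to each component, and then compares the component bound of order $m_{i}$ with the bound of order $n$; the ``omitted details'' the paper alludes to are precisely the two monotonicity facts you flagged as the extra cost of that route, namely that $k^{-2}/12$ is decreasing in $k$ and that $x/(c+2x)$ is increasing in $x$ for $c>0$, $x>0$. Your version instead keeps the argument global: you observe that the Weyl step and Constantine's bound never use connectedness, and you localize the only connectedness-sensitive ingredient, the Cardoso et al.\ lower bound on $\lambda_{\min}(Q)$, through the block-diagonal identity $\lambda_{\min}(Q(G))=\min_{i}\lambda_{\min}(Q(H_{i}))$ together with $1/(12m_{i}^{2})\geq 1/(12n^{2})$. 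What your route buys is that it yields the middle term of the chain, $\frac{-\lambda_{\min}(A(G))}{\lambda_{\min}(Q(G))-2\lambda_{\min}(A(G))}$, directly in terms of the spectra of the whole graph; the component route most naturally bounds $\alpha_{0}(G)$ by a maximum of component-wise fractions, and recovering the stated global fraction from that maximum requires one further (easy) comparison, again via the monotonicity of $x/(c+2x)$. What the paper's route buys is a one-line reduction given the machinery already in place. Two minor points on your write-up: citing Corollary \ref{bipgen} for $\lambda_{\min}(Q(G))>0$ is a slight shortcut (strictness needs the per-component strict monotonicity from Proposition \ref{mono}, or the fact quoted in the proof of Proposition \ref{bip} plus block-diagonality), but it is harmless since your quantitative bound $\lambda_{\min}(Q(G))\geq 1/(12n^{2})$ supersedes it; and it is worth noting that each nonbipartite component has order $m_{i}\geq 3$, so $E_{m_{i}}$ is defined and the Cardoso et al.\ result indeed applies.
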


We omit the details of the proof, which is based on the facts that $n^{-2}/12$
is decreasing in $n$ and that
\[
\frac{x}{c+2x}%
\]
is increasing in $x$ if $c>0$ and $x>0$.

\section{The chromatic number and $\lambda_{\min}\left(  A_{\alpha}\left(
G\right)  \right)  $}

Connections between $\alpha_{0}\left(  G\right)  $ and the chromatic number of
$G$ exist also for nonbipartite graphs. We state one such relation in
Corollary \ref{cron} below, which is deduced from a more general result,
stated in Theorem \ref{tH}.

Recall that in \cite{LOAN11}, it has been shown that if $G$ is a graph of
order $n,$ size $m$, and chromatic number $\chi,$ then
\[
\lambda_{\min}\left(  Q\left(  G\right)  \right)  \leq\frac{\left(
\chi-2\right)  2m}{\left(  \chi-1\right)  n}.
\]
It turns out that this bound can be extended to all matrices $A_{\alpha
}\left(  G\right)  $ as follows:

\begin{theorem}
\label{tH}Let $G$ be a graph of order $n,$ size $m$, and chromatic number
$\chi.$ If $\alpha\in\left[  0,1\right]  $, then
\begin{equation}
\lambda_{\min}\left(  A_{\alpha}\left(  G\right)  \right)  \leq\frac{\left(
\alpha\chi-1\right)  2m}{\left(  \chi-1\right)  n}. \label{ubo}%
\end{equation}

\end{theorem}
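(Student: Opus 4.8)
The plan is to bound $\lambda_{\min}\left(A_{\alpha}\left(G\right)\right)$ from above through the Rayleigh quotient of a well-chosen family of test vectors, and then average the resulting inequalities. The natural source of such vectors is a proper $\chi$-coloring (we may assume $\chi\geq2$, since $G$ has an edge). Fix color classes $V_{1},\dots,V_{\chi}$, let $\omega=e^{2\pi i/\chi}$ be a primitive $\chi$-th root of unity, and for each permutation $\sigma$ of the colors define $x^{\sigma}\in\mathbb{C}^{n}$ by $x^{\sigma}_{v}=\omega^{\sigma(k)}$ for $v\in V_{k}$. Because $A_{\alpha}\left(G\right)$ is real symmetric, every nonzero complex $x$ satisfies $\lambda_{\min}\left(A_{\alpha}\left(G\right)\right)\leq\overline{x}^{\,T}A_{\alpha}\left(G\right)x/(\overline{x}^{\,T}x)$, and here $\overline{x^{\sigma}}^{\,T}x^{\sigma}=n$ for every $\sigma$ since $|x^{\sigma}_{v}|=1$.

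The next step is to compute the Hermitian form. Splitting $A_{\alpha}=\alpha D+(1-\alpha)A$ gives $\overline{x^{\sigma}}^{\,T}Dx^{\sigma}=\sum_{v}d_{v}=2m$, independent of $\sigma$, while $\overline{x^{\sigma}}^{\,T}Ax^{\sigma}=2\sum_{uv\in E}\cos\bigl(2\pi(\sigma(l)-\sigma(k))/\chi\bigr)$, where for an edge $uv$ we write $u\in V_{k}$ and $v\in V_{l}$, noting that $k\neq l$ for every edge of a properly colored graph. Thus for each $\sigma$ we have $n\lambda_{\min}\left(A_{\alpha}\left(G\right)\right)\leq\overline{x^{\sigma}}^{\,T}A_{\alpha}\left(G\right)x^{\sigma}$, and it remains to average these inequalities over all $\chi!$ permutations.

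The key step, and the only one that is not bookkeeping, is evaluating the averaged adjacency contribution. For a fixed edge with endpoints in classes $k\neq l$, as $\sigma$ ranges over all permutations the ordered pair $(\sigma(k),\sigma(l))$ runs over every ordered pair of distinct residues equally often; hence the average of $\cos\bigl(2\pi(\sigma(l)-\sigma(k))/\chi\bigr)$ equals $\frac{1}{\chi-1}\sum_{j=1}^{\chi-1}\cos(2\pi j/\chi)=-\frac{1}{\chi-1}$, using that the $\chi$-th roots of unity sum to zero. Consequently the permutation-average of $\overline{x^{\sigma}}^{\,T}Ax^{\sigma}$ is $-2m/(\chi-1)$, and the average of the full form is $2m\bigl[\alpha-(1-\alpha)/(\chi-1)\bigr]=(\alpha\chi-1)2m/(\chi-1)$. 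Averaging the inequalities $n\lambda_{\min}\left(A_{\alpha}\left(G\right)\right)\leq\overline{x^{\sigma}}^{\,T}A_{\alpha}\left(G\right)x^{\sigma}$ over $\sigma$ then gives $n\lambda_{\min}\left(A_{\alpha}\left(G\right)\right)\leq(\alpha\chi-1)2m/(\chi-1)$, which is exactly (\ref{ubo}).

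I expect the main obstacle to be the clean handling of the coloring-dependent cosine sum: the decisive idea is that averaging over color permutations eliminates all dependence on how edges are distributed among the classes, leaving only $m$ and $n$. A secondary point worth verifying is the legitimacy of complex test vectors for a real symmetric matrix, which holds because $\overline{x}^{\,T}Mx=u^{T}Mu+v^{T}Mv$ is real whenever $x=u+iv$ and $M=M^{T}$, so the inequality $\lambda_{\min}(M)\leq\overline{x}^{\,T}Mx/(\overline{x}^{\,T}x)$ remains valid.
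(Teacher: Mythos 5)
Your proof is correct, but it takes a genuinely different route from the paper's. The paper also argues via Rayleigh quotients of coloring-based test vectors plus averaging, but its vectors are real and indexed by the color classes: for each class $V_{k}$ it takes the vector equal to $\chi-1$ on $V_{k}$ and $-1$ elsewhere, computes $\left\langle A_{\alpha}\mathbf{x},\mathbf{x}\right\rangle =2m+\left(  \alpha\chi-2\right)  \chi e_{k}$ where $e_{k}$ is the degree sum of $V_{k}$, and $\left\Vert \mathbf{x}\right\Vert ^{2}=\chi\left(  \chi-2\right)  \left\vert V_{k}\right\vert +n$; summing the resulting $\chi$ inequalities kills the dependence on the $e_{k}$ and $\left\vert V_{k}\right\vert $. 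You instead use complex unit-modulus vectors built from $\chi$-th roots of unity, one per permutation of the colors, so that the degree term ($2m$) and the norm ($n$) are constant from the start, and all the work is pushed into the symmetrization over the $\chi!$ permutations, where the identity $\sum_{j=1}^{\chi-1}\cos\left(  2\pi j/\chi\right)  =-1$ eliminates any dependence on how edges are distributed among pairs of classes. What each approach buys: the paper's argument stays entirely within real linear algebra, uses only $\chi$ test vectors, and needs no facts about roots of unity; yours trades the bookkeeping of class sizes and degree sums for a cleaner averaging step in the Hoffman-bound style, and makes transparent why only $m$ and $n$ survive in the final bound. Your two auxiliary points are both handled correctly: the equidistribution of $\left(  \sigma(k),\sigma(l)\right)  $ over ordered pairs of distinct residues is what makes the cosine average exactly $-1/\left(  \chi-1\right)  $, and Rayleigh's principle for real symmetric matrices over $\mathbb{C}^{n}$ holds since such matrices are Hermitian (your decomposition $x=u+iv$ argument is a valid justification). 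Note that, like the paper, you implicitly need $\chi\geq2$, which is forced anyway since the right side of (\ref{ubo}) is undefined for $\chi=1$.
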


\begin{proof}
Suppose that $\alpha\in\left[  0,1\right]  $, and set $A_{\alpha}:=A_{\alpha
}\left(  G\right)  $ and $\lambda_{\min}:=\lambda_{n}\left(  A_{\alpha
}\right)  $. Let $V:=V\left(  G\right)  $ and suppose that $V_{1}%
,\ldots,V_{\chi}$ are the color classes of $G$. For every $k\in\left\{
1,\ldots,\chi\right\}  $, set
\[
e_{k}=\sum_{u\in V_{k}}d\left(  u\right)  .
\]

Our main goal is to prove that for every $k\in\left\{  1,\ldots,\chi\right\}
$,%
\begin{equation}
\lambda_{\min}\left(  A_{\alpha}\right)  \left(  \chi\left(  \chi-2\right)
\left\vert V_{k}\right\vert +n\right)  \leq2m+\left(  \alpha\chi-2\right)
\chi e_{k}. \label{main}%
\end{equation}
To this end, take an integer $k\in\left\{  1,\ldots,\chi\right\}  $ and define
a vector $\mathbf{x}:=\left(  x_{1},\ldots,x_{n}\right)  $ by%
\[
x_{i}:=\left\{
\begin{array}
[c]{ll}%
\chi-1, & \text{if }i\in V_{k}\text{;}\\
-1, & \text{otherwise.}%
\end{array}
\right.
\]
Calculating the quadratic form $\left\langle A_{\alpha}\mathbf{x}%
,\mathbf{x}\right\rangle $ for the vector $\mathbf{x}$, we get%
\begin{equation}
\left\langle A_{\alpha}\mathbf{x},\mathbf{x}\right\rangle =\alpha\sum_{u\in
V}x_{u}^{2}d\left(  u\right)  +2\left(  1-\alpha\right)  \sum_{\left\{
u,v\right\}  \in E\left(  G\right)  }x_{u}x_{v}. \label{qf}%
\end{equation}
To find the first term in the right side of (\ref{qf}), note that
\begin{align*}
\sum_{u\in V}x_{u}^{2}d\left(  u\right)   &  =\sum_{u\in V\backslash V_{k}%
}d\left(  u\right)  +\sum_{u\in V_{k}}\left(  \chi-1\right)  ^{2}d\left(
u\right) \\
&  =2m+\chi\left(  \chi-2\right)  e_{k}.
\end{align*}
Similarly, to find the second term in the right side of (\ref{qf}), note that%
\begin{align*}
\sum_{\left\{  u,v\right\}  \in E\left(  G\right)  }x_{u}x_{v}  &
=\sum_{\left\{  u,v\right\}  \in E\left(  G\right)  ,u\in V_{k},v\notin V_{k}%
}x_{u}x_{v}+\sum_{\left\{  u,v\right\}  \in E\left(  G\right)  ,u\notin
V_{k},v\notin V_{k}}x_{u}x_{v}\\
&  =\left(  1-\chi\right)  e_{k}+\left(  m-e_{k}\right)  =m-\chi e_{k}.
\end{align*}
Combining these two identities, we obtain
\begin{align*}
\left\langle A_{\alpha}\mathbf{x},\mathbf{x}\right\rangle  &  =\alpha\left(
2m+\chi\left(  \chi-2\right)  e_{k}\right)  +2\left(  1-\alpha\right)  \left(
m-\chi e_{k}\right) \\
&  =2m+\left(  \alpha\chi-2\right)  \chi e_{k}.
\end{align*}
On the other hand, Rayleigh's principle implies that
\[
\lambda_{\min}\left(  A_{\alpha}\right)  \left\Vert \mathbf{x}\right\Vert
^{2}\leq\left\langle A_{\alpha}\mathbf{x},\mathbf{x}\right\rangle .
\]
Hence, we complete the proof of (\ref{main}) by noting that%
\[
\left\Vert \mathbf{x}\right\Vert ^{2}=\left(  \chi-1\right)  ^{2}\left\vert
V_{k}\right\vert +\left(  n-\left\vert V_{k}\right\vert \right)  =\chi\left(
\chi-2\right)  \left\vert V_{k}\right\vert +n.
\]

Finally, adding inequalities (\ref{main}) for all $k\in\left\{  1,\ldots
,\chi\right\}  $, we get%
\[
\lambda_{\min}\sum_{k=1}^{\chi}\left(  \chi\left(  \chi-2\right)  \left\vert
V_{k}\right\vert +n\right)  \leq\sum_{k=1}^{\chi}\left(  2m+\left(  \alpha
\chi-2\right)  \chi e_{k}\right)  ,
\]
implying that%
\[
\lambda_{\min}\left(  \chi\left(  \chi-2\right)  n+\chi n\right)  \leq2\chi
m+2\left(  \alpha\chi-2\right)  \chi m.
\]
Now inequality (\ref{ubo}) follows by simple algebra.
\end{proof}

\begin{corollary}
\label{cron}If $G$ is an $r$-colorable graph, then $\alpha_{0}\left(
G\right)  \geq1/r$.\footnote{Note that Corollary \ref{cron} has been proved
for regular graphs in \cite{Nik16}.}
\end{corollary}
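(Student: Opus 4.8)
The plan is to derive Corollary~\ref{cron} directly from Theorem~\ref{tH}, since an $r$-colorable graph has chromatic number $\chi \leq r$. The key observation is that the statement $\alpha_0(G) \geq 1/r$ is equivalent to saying that $A_{\alpha}(G)$ fails to be positive semidefinite for every $\alpha < 1/r$; by the characterization of $\alpha_0$ given in the introduction (namely that $A_\alpha(G)$ is positive semidefinite iff $\alpha \geq \alpha_0(G)$), it suffices to exhibit, for each such $\alpha$, a strictly negative eigenvalue of $A_\alpha(G)$, i.e. to show $\lambda_{\min}(A_\alpha(G)) < 0$.

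First I would rule out the degenerate case. If $G$ is empty (has no edges) then $m = 0$, so the right-hand side of \eqref{ubo} is zero and the bound gives no information; but an empty graph is $1$-colorable with $A_\alpha(G) = 0$, so $\alpha_0(G) = 0$ and the inequality $\alpha_0 \geq 1/r$ can fail. Hence the intended reading is that $G$ is nonempty, so $m \geq 1$ and $n \geq 2$. With this in hand, I would set $\chi := \chi(G) \leq r$ and apply Theorem~\ref{tH}: for any $\alpha \in [0,1]$,
\[
\lambda_{\min}\left(A_\alpha(G)\right) \leq \frac{\left(\alpha\chi - 1\right)2m}{\left(\chi - 1\right)n}.
\]
The denominator $(\chi - 1)n$ is positive provided $\chi \geq 2$ (a nonempty graph has $\chi \geq 2$), and the factor $2m$ is positive, so the sign of the right-hand side is governed by $\alpha\chi - 1$.

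Now suppose for contradiction that $\alpha_0(G) < 1/r$, and pick any $\alpha$ with $\alpha_0(G) \leq \alpha < 1/r$. Since $\chi \leq r$, we have $\alpha\chi < (1/r)\cdot r = 1$, so $\alpha\chi - 1 < 0$, whence the bound yields $\lambda_{\min}(A_\alpha(G)) < 0$. This contradicts the fact that $A_\alpha(G)$ is positive semidefinite for $\alpha \geq \alpha_0(G)$. Therefore $\alpha_0(G) \geq 1/r$.

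I anticipate no serious obstacle here: the real content is already packaged in Theorem~\ref{tH}, and the corollary is essentially a sign analysis of its right-hand side combined with the monotonicity of $\chi$ under the $r$-colorability hypothesis. The only point requiring care is the handling of the edge cases—empty graphs and the positivity of the denominator—which is why I would state the nonemptiness assumption explicitly and note that $\chi \geq 2$ for any graph with an edge.
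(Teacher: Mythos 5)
Your proposal is correct and is exactly the paper's (implicit) argument: the corollary is stated as an immediate consequence of Theorem~\ref{tH}, obtained by the same sign analysis of $(\alpha\chi-1)$ using $\chi\leq r$, so that $\lambda_{\min}(A_{\alpha}(G))<0$ whenever $\alpha<1/r$. Your explicit handling of the empty-graph degenerate case (where $m=0$, $\chi=1$, and the bound degenerates) is a legitimate point of care that the paper leaves tacit, but it does not change the route.
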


Both Theorem \ref{tH} and Corollary \ref{cron} are tight. Indeed, let $G$ be a
complete regular $r$-partite graph. Writing $n$ and $m$ for the order and the
size of $G$, it is not hard to see that
\[
\lambda_{\min}\left(  A_{\alpha}\left(  G\right)  \right)  =\alpha
\frac{\left(  r-1\right)  n}{r}-\left(  1-\alpha\right)  \frac{n}{r}%
=\frac{\left(  \alpha r-1\right)  2m}{\left(  r-1\right)  n},
\]
which implies that $\alpha_{0}\left(  G\right)  =1/r.$\bigskip

\end{document}